\documentclass[12pt]{article}
\usepackage{amsmath, amsfonts, amsthm,amssymb, color, hyperref, enumerate, extarrows, cite}

 \usepackage[titletoc]{appendix}

\textwidth=17.5cm
\textheight=23.5cm
\parindent=16pt
\oddsidemargin=-0.5cm
\evensidemargin=-0.5cm
\topmargin=-1.9cm

\newtheorem{theorem}{Theorem}
\newtheorem{definition}[theorem]{Definition}
\newtheorem{cor}[theorem]{Corollary}
\newtheorem{lem}[theorem]{Lemma}
\newtheorem*{notation*}{Notation}
{\theoremstyle{definition}
\newtheorem{example}[theorem]{Example}}
\newtheorem{pro}[theorem]{Proposition}

\numberwithin{equation}{section}
{\theoremstyle{definition}
}
\usepackage[T1]{fontenc}

\hypersetup{
	colorlinks   = true,
	citecolor    = magenta}

\usepackage[titletoc]{appendix}

\begin{document}

\title{\vspace{-1.2cm} \bf Sufficient condition for compactness of the $\overline{\partial}$-Neumann operator using the Levi core\rm}

\author{John N. Treuer}
\date{}

\maketitle

\begin{abstract}

On a smooth, bounded pseudoconvex domain $\Omega$ in $\mathbb{C}^n$, to verify that Catlin's Property ($P$) holds for $b\Omega$, it suffices to check that it holds on the set of D'Angelo infinite type boundary points.  In this note, we consider the support of the Levi core, $S_{\mathfrak{C}(\mathcal{N})}$, a subset of the infinite type points, and show that Property ($P$) holds for $b\Omega$ if and only if it holds for $S_{\mathfrak{C}(\mathcal{N})}$.  Consequently, if Property ($P$) holds on $S_{\mathfrak{C}(\mathcal{N})}$, then the $\overline{\partial}$-Neumann operator $N_1$ is compact on $\Omega$.

\end{abstract}

\renewcommand{\thefootnote}{\fnsymbol{footnote}}
\footnotetext{\hspace*{-7mm} 
\begin{tabular}{@{}r@{}p{16.5cm}@{}}
& Keywords. $\overline{\partial}$-Neumann problem, compactness, Property ($P$), Levi core, holomorphic dimension zero\\
& Mathematics Subject Classification. Primary 32W05
\end{tabular}}

\section{Introduction}

Let $\Omega \subset \mathbb{C}^n$ be a $(C^{\infty})$-smooth, bounded pseudoconvex domain. Catlin introduced a potential theoretic condition called Property ($P$) and showed that if the boundary $b\Omega$ of $\Omega$ satisfies Property ($P$), then the $\overline{\partial}$-Neumann operator $N_1$ is compact \cite{Ca84}. Compactness has been a source of interest because it implies global regularity and it is believed that it can be more readily characterized than global regularity in terms of the boundary geometry of the domain.  We refer the reader to the monograph \cite{St10} for details on this point as well as to other applications of compactness.  Property ($P$) is useful as a sufficient condition for compactness because  for many types of domains, it is more tractable to show that Property ($P$) holds than to verify compactness directly.  Case in point, Catlin \cite{Ca84} proved that if $\Omega$ is additionally a (D'Angelo) finite type domain, then $N_1$ is compact because Property ($P$) holds.  Whether Property ($P$) is necessary for compactness remains an open question.  Fu and Straube \cite{FuSt01, FuSt02} showed that the two are equivalent on the locally convexifiable domains while Christ and Fu \cite{ChFu05} showed they are equivalent on Hartogs domains in $\mathbb{C}^2$.  See also \cite{Mc02, MuSt07, Zh21} for additional works that address the relationship between compactness and Property ($P$) or one of its variants.

Using the framework of Choquet theory, Sibony \cite{Si87} generalized Property ($P$) from boundaries of smooth, bounded pseudoconvex domains to compact sets in $\mathbb{C}^n$.  Through his and Catlin's work, the problem of determining if Property ($P$) holds on $b\Omega$ reduces to determining if Property ($P$) holds on the compact subset of infinite type boundary points.  In this note, we reduce the problem further from determining if Property ($P$) holds on the infinite type points to determining if Property ($P$) holds on a compact subset of the infinite type points called the support of the Levi core. 

Let $\mathcal{N}$ denote the Levi null distribution (in the sense of manifold theory), the set of pairs $(p, X)$ where $p \in b\Omega$ and $X = \sum_{k=1}^n a_k{\partial / \partial{z_k}}|_{p}$ is a Levi null vector at $p$.  Recently, Dall'Ara and Mongodi \cite{DaMo21} considered a (potentially transfinite) decreasing sequence of distributions $\mathcal{N}, \mathcal{N}^{1}, \mathcal{N}^2,\ldots$ where $\mathcal{N}^{\alpha + 1}$ consists of the Levi null vectors of $\mathcal{N}^{\alpha}$ that are tangent to the support of $\mathcal{N}^{\alpha}$.  If $\beta$ is a limit ordinal, then $\mathcal{N}^{\beta}$ is the intersection of the previous distributions in the sequence.  They showed that this sequence stabilizes in countably many steps; there is an ordinal $\gamma$ such that $\mathcal{N}^{\gamma} = \mathcal{N}^{\eta}$ for all $\eta \geq \gamma$, and they called $\mathcal{N}^{\gamma}$ the Levi core $\mathfrak{C}(\mathcal{N})$ of $b\Omega$.  They computed $\mathfrak{C}(\mathcal{N})$ for a number of different types of domains and CR-manifolds and gave applications of the Levi core to the Diederich-Forn\ae{}ss index, the D'Angelo class and regularity properties of the $\overline{\partial}$-Neumann operator.

Our main result examines the relationship between the support of the Levi core and Property ($P$).

\begin{theorem}\label{main theorem}
Let $\Omega \subset \mathbb{C}^n$ be a smooth, bounded pseudoconvex domain.  Property ($P$) holds on $b\Omega$ if and only if Property ($P$) holds on the support of the Levi core, $S_{\mathfrak{C}(\mathcal{N})}$.
\end{theorem}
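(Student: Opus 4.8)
\section*{Proof proposal}

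The plan is to treat the two implications separately, the forward one being formal and the reverse one carrying all of the content. Since $S_{\mathfrak{C}(\mathcal{N})}$ is a compact subset of $b\Omega$ and Property ($P$) is inherited by compact subsets (a family of bounded plurisubharmonic functions with large complex Hessian on a neighborhood of $b\Omega$ restricts to such a family on a neighborhood of $S_{\mathfrak{C}(\mathcal{N})}$), the ``only if'' direction is immediate by monotonicity. For the converse I assume Property ($P$) holds on $S_{\mathfrak{C}(\mathcal{N})}$. By the reduction of Catlin and Sibony recalled in the introduction it suffices to verify Property ($P$) on the compact set $Z$ of D'Angelo infinite type points, and since an infinite type point is necessarily Levi degenerate we have $S_{\mathfrak{C}(\mathcal{N})} \subseteq Z \subseteq S_{\mathcal{N}}$. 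Thus it is enough to establish Property ($P$) on $S_{\mathcal{N}} = S_{\mathcal{N}^0}$, after which Property ($P$) on $Z$, and hence on $b\Omega$, follows by monotonicity and the cited reduction; note that the finite type points possibly lying in the intermediate supports $S_{\mathcal{N}^\alpha}$ are harmless, as they only help.

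The core of the argument is a transfinite induction showing that Property ($P$) holds on $S_{\mathcal{N}^\alpha}$ for every $\alpha \leq \gamma$, with base case $\alpha = \gamma$ supplied by the hypothesis on $S_{\mathfrak{C}(\mathcal{N})} = S_{\mathcal{N}^\gamma}$. The decisive step is the successor step: assuming Property ($P$) on $S_{\mathcal{N}^{\alpha+1}}$, deduce it on $S_{\mathcal{N}^\alpha}$. By the very definition of $\mathcal{N}^{\alpha+1}$, a point $p \in S_{\mathcal{N}^\alpha} \setminus S_{\mathcal{N}^{\alpha+1}}$ is one where $\mathcal{N}^\alpha$ is present but no Levi null vector of $\mathcal{N}^\alpha$ is tangent to $S_{\mathcal{N}^\alpha}$. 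At such a point I would, for each prescribed $M$, build a smooth plurisubharmonic function $\varphi$ with $0 \leq \varphi \leq 1$ near $p$ whose complex Hessian is at least $M$ at $p$: the transversality of the null directions to $S_{\mathcal{N}^\alpha}$ means precisely that the directions in which the Levi form of a pseudoconvex defining function degenerates point off the support, so a suitably damped, rapidly convex function of a plurisubharmonic comparison function adapted to $S_{\mathcal{N}^\alpha}$ contributes positive Hessian in exactly those otherwise-degenerate directions, while the defining function controls the complementary directions. Gluing this local family to the inductive Property ($P$) family on $S_{\mathcal{N}^{\alpha+1}}$ via Sibony's localization principle (a compact set has Property ($P$) as soon as each of its points has a neighborhood on which it does) yields Property ($P$) on $S_{\mathcal{N}^\alpha}$.

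For a limit ordinal $\beta$ I would exploit that $\mathcal{N}^\beta = \bigcap_{\alpha < \beta} \mathcal{N}^\alpha$ is, fiber by fiber, a decreasing chain of subspaces of $\mathbb{C}^n$ and therefore stabilizes after finitely many steps; this forces exclusion of points where the distribution is genuinely nonzero to occur at successor stages, and the residual closure points are absorbed by Sibony's localization, reducing the limit case to successor cases already handled. Unwinding the induction down to $\alpha = 0$ then completes the proof. The main obstacle is unmistakably the successor step: converting the qualitative transversality of the Levi null directions to the support into a quantitative, uniform lower bound on the Hessian of a \emph{bounded} plurisubharmonic function. The delicate point is that $S_{\mathcal{N}^\alpha}$ is only a closed set, not a priori a submanifold, so the naive ``distance to the support'' must be replaced by a genuinely plurisubharmonic comparison function sensitive to the possibly singular geometry, and estimating its Hessian uniformly in the transverse null directions while retaining boundedness is the step I expect to consume the bulk of the work.
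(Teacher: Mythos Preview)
Your overall architecture is sound and close to the paper's: stratify the weakly pseudoconvex points by $K_\alpha = S_{\mathcal{N}^\alpha} \setminus S_{\mathcal{N}^{\alpha+1}}$, show each $K_\alpha$ is benign for Property ($P$), and conclude via the countable-union stability of Property ($P$). The forward direction and the limit-ordinal observation (fiberwise the descending chain of subspaces stabilizes, hence $S_{\mathcal{N}^\beta} = \bigcap_{\alpha<\beta} S_{\mathcal{N}^\alpha}$) are correct. The downward transfinite induction framing is unnecessary and a bit awkward, since well-founded recursion on ordinals runs upward; the paper simply notes there are only countably many nonempty strata and covers $b\Omega \setminus S_{\mathfrak{C}(\mathcal{N})}$ directly by countably many compacts with Property ($P$).

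The genuine gap is exactly where you locate it, in the successor step, and the paper fills it by a device you do not mention. First, a short induction shows $\mathcal{N}_p^{\alpha+1} = \mathbb{C}T_p S_{\mathcal{N}^\alpha} \cap \mathcal{N}_p$ with the \emph{full} null space $\mathcal{N}_p$ on the right, not merely $\mathcal{N}_p^\alpha$ as the raw definition gives; your sketch slides from ``no vector of $\mathcal{N}^\alpha$ is tangent'' to ``the Levi-degenerate directions point off the support,'' and that slide needs precisely this lemma. Second, and decisively, the paper invokes the Dall'Ara--Mongodi approximation: near any $p$ there is a genuine embedded submanifold $V \subset b\Omega$ locally containing $S_{\mathcal{N}^\alpha}$ with $\mathbb{C}T_pV = \mathbb{C}T_p S_{\mathcal{N}^\alpha}$. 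Combining the two facts, $\mathbb{C}T_pV \cap \mathcal{N}_p = \{0\}$, and by upper semicontinuity of the dimension of the closed distribution $q \mapsto \mathbb{C}T_qV \cap \mathcal{N}_q$ this persists on a neighborhood, so $V$ has \emph{holomorphic dimension zero} near $p$. At that point Catlin--Sibony (Proposition~4.15 in Straube's monograph) applies off the shelf: compact subsets of holomorphic-dimension-zero submanifolds of $b\Omega$ satisfy Property ($P$). The hand-built bounded plurisubharmonic comparison function adapted to the singular set $S_{\mathcal{N}^\alpha}$ that you anticipate as ``the bulk of the work'' is therefore never needed; the submanifold approximation converts the problem into one already solved in the literature.
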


In \cite{Bo88, Si87}, Boas and Sibony independently proved that for a smooth, bounded pseudoconvex domain \emph{if the Hausdorff two-dimensional measure of the infinite type points is zero, then Property ($P$) holds for $b\Omega$}.  As a corollary of Theorem \ref{main theorem}, we can give the following improvement to Boas and Sibony's result.

\begin{cor}\label{main corollary}
Let $\Omega$ be as above.  If the Hausdorff two-dimensional measure of $S_{\mathfrak{C}(\mathcal{N})}$ is zero, then Property ($P$) holds for $b\Omega$.
\end{cor}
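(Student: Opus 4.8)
The plan is to deduce Corollary \ref{main corollary} from Theorem \ref{main theorem} together with the compact-set form of the Boas--Sibony theorem. Recall that in Sibony's framework Property ($P$) is defined for an arbitrary compact set $K \subset \mathbb{C}^n$: the set $K$ has Property ($P$) if for every $M > 0$ there is a plurisubharmonic function $\varphi$ on a neighborhood of $K$ with $0 \le \varphi \le 1$ whose Levi form satisfies $\sum_{j,k} \frac{\partial^2 \varphi}{\partial z_j \partial \bar z_k}(z)\, w_j \bar w_k \ge M |w|^2$ for all $z \in K$ and all $w \in \mathbb{C}^n$. Since $S_{\mathfrak{C}(\mathcal{N})}$ is a compact subset of the infinite type points of $b\Omega$, Theorem \ref{main theorem} asserts that Property ($P$) holds on $b\Omega$ if and only if it holds on the compact set $S_{\mathfrak{C}(\mathcal{N})}$. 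Thus it suffices to verify Property ($P$) directly on $S_{\mathfrak{C}(\mathcal{N})}$.

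The crucial observation is that the Boas--Sibony hypothesis of vanishing two-dimensional Hausdorff measure is a statement purely about a compact subset of $\mathbb{C}^n$, and their argument in fact yields the following compact-set version: if $K \subset \mathbb{C}^n$ is compact with $\mathcal{H}^2(K) = 0$, then $K$ has Property ($P$). Granting this, I would apply it to $K = S_{\mathfrak{C}(\mathcal{N})}$, using the hypothesis $\mathcal{H}^2(S_{\mathfrak{C}(\mathcal{N})}) = 0$, to conclude that Property ($P$) holds on $S_{\mathfrak{C}(\mathcal{N})}$. The ``if'' direction of Theorem \ref{main theorem} (Property ($P$) on $S_{\mathfrak{C}(\mathcal{N})}$ implies Property ($P$) on $b\Omega$) then immediately upgrades this to Property ($P$) on all of $b\Omega$, which is the desired conclusion.

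The main obstacle is justifying that the Boas--Sibony result genuinely applies to the abstract compact set $S_{\mathfrak{C}(\mathcal{N})}$ rather than only to the full set of infinite type points of a domain. To address this I would revisit the construction underlying \cite{Bo88, Si87}: given $M > 0$, one covers $K$ by finitely many small balls whose radii can be taken so that $\sum_j r_j^2$ is arbitrarily small (possible precisely because $\mathcal{H}^2(K) = 0$), and assembles the required bounded plurisubharmonic function with large Levi form on $K$ from logarithmic-type potentials localized on this cover, the smallness of $\sum_j r_j^2$ controlling the global bound $0 \le \varphi \le 1$. Since this construction uses only the metric smallness of $K$ inside $\mathbb{C}^n$ and nothing special to the infinite type locus, it transfers verbatim to $S_{\mathfrak{C}(\mathcal{N})}$. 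With that observation in hand, the corollary follows by chaining the compact-set Boas--Sibony statement with Theorem \ref{main theorem}.
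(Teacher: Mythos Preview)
Your argument is correct and does prove the corollary, but it proceeds slightly differently from the paper. You invoke the \emph{absolute} form of the Boas--Sibony fact --- any compact $K\subset\mathbb{C}^n$ with $\mathcal{H}^2(K)=0$ has Property~($P$) --- and then feed Property~($P$) on $S_{\mathfrak{C}(\mathcal{N})}$ into Theorem~\ref{main theorem} as a black box. The paper instead reopens the proof of Theorem~\ref{main theorem}, takes the partition $b\Omega = S_{\mathfrak{C}(\mathcal{N})} \cup K_{-1}\cup\bigcup_{\alpha}K_\alpha$ established there, and applies Sibony's \emph{relative} statement \cite[Remarque p.~310]{Si87}: if $b\Omega=S_1\cup S_2$ with $\mathcal{H}^2(S_2)=0$ and every compact subset of $S_1$ satisfies Property~($P$), then so does $b\Omega$. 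Your route is cleaner in that it treats Theorem~\ref{main theorem} exactly as stated and needs nothing from its proof; the paper's route has the small advantage that it quotes Sibony verbatim rather than relying on the abstract compact-set version you extract. That abstract version is true, but the ball-covering sketch you give (``logarithmic-type potentials'' with $\sum r_j^2$ small) would need to be written out more carefully to stand on its own --- in practice one simply cites it.
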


In \cite{DaMo21}, Dall'Ara and Mongodi showed that for a smooth, bounded pseudoconvex domain \emph{if $\mathfrak{C}(\mathcal{N})$ is trivial (equivalently $S_{\mathfrak{C}(\mathcal{N})}$ is empty), then the $\overline{\partial}$-Neumann operator is exactly regular}.  In view of Theorem \ref{main theorem} and Corollary \ref{main corollary}, we can improve exact regularity to compactness.

\begin{cor}\label{corollary two}
Let $\Omega$ be as above.  If the Hausdorff two-dimensional measure of $S_{\mathfrak{C}(\mathcal{N})}$ is zero, then the $\overline{\partial}$-Neumann operator $N_1$ is compact.  
\end{cor}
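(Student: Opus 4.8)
The plan is to derive Corollary \ref{corollary two} by chaining together the two results that precede it in the excerpt, so that essentially no new analytic work is required. The hypothesis is that the Hausdorff two-dimensional measure of $S_{\mathfrak{C}(\mathcal{N})}$ is zero. First I would invoke Corollary \ref{main corollary}: under exactly this hypothesis, Property ($P$) holds for $b\Omega$. This is the step that packages the combination of Theorem \ref{main theorem} (the equivalence of Property ($P$) on $b\Omega$ and on $S_{\mathfrak{C}(\mathcal{N})}$) with the Boas--Sibony criterion (that vanishing two-dimensional Hausdorff measure of a compact set forces Property ($P$) on it), so I can treat it as already established.

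Second, I would apply Catlin's theorem \cite{Ca84}, quoted in the introduction, which states that if $b\Omega$ satisfies Property ($P$), then the $\overline{\partial}$-Neumann operator $N_1$ is compact. Since the domain $\Omega$ is assumed throughout to be smooth, bounded, and pseudoconvex, the hypotheses of Catlin's theorem are met, and compactness of $N_1$ follows immediately. In short, the logical skeleton is
\begin{equation*}
\mathcal{H}^2\bigl(S_{\mathfrak{C}(\mathcal{N})}\bigr)=0 \;\Longrightarrow\; \text{Property ($P$) on } b\Omega \;\Longrightarrow\; N_1 \text{ compact},
\end{equation*}
where the first implication is Corollary \ref{main corollary} and the second is Catlin's compactness theorem.

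I do not anticipate any genuine obstacle here, since this corollary is purely a concatenation of previously stated results; the only thing to verify is that the standing hypotheses on $\Omega$ (smooth, bounded, pseudoconvex) are precisely those required by both Corollary \ref{main corollary} and Catlin's theorem, which they are. If anything deserves a remark, it is that Corollary \ref{corollary two} genuinely strengthens the Dall'Ara--Mongodi exact-regularity conclusion: their triviality hypothesis $S_{\mathfrak{C}(\mathcal{N})}=\varnothing$ is the extreme case $\mathcal{H}^2(S_{\mathfrak{C}(\mathcal{N})})=0$, so the present statement covers strictly more domains while upgrading exact regularity to the stronger property of compactness. Thus the write-up should be only a sentence or two, explicitly citing Corollary \ref{main corollary} followed by \cite{Ca84}.
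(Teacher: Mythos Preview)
Your proposal is correct and follows exactly the same two-step chain as the paper: first invoke Corollary~\ref{main corollary} to obtain Property~($P$) on $b\Omega$, then apply Catlin's theorem \cite{Ca84} to conclude compactness of $N_1$. The paper's proof is indeed just the two sentences you anticipate.
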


We conclude the paper by providing an example of a Hartogs domain in $\mathbb{C}^2$ where the boundary satisfies Property ($P$) and the support of the Levi core has Hausdorff dimension three with positive three-dimensional Hausdorff measure.  From this example, we conclude that Corollary \ref{main corollary} is a sufficient but not a necessary condition for Property ($P$) to hold.

\section{Property ($P$) and the Levi core}

We begin by recalling the definition of Property ($P$).

\begin{definition}
Let $K$ be a compact set in $\mathbb{C}^n$.  $K$ satisfies Property ($P$) if for all $M > 0$ there is a neighborhood $U$ of $K$ and $\lambda_M \in C^2(U),\, 0 \leq \lambda_M \leq 1$, such that
$$
\sum_{j, k = 1}^n {\partial^2 \lambda_M \over \partial z_j \partial\overline{z}_k}(z) w_j\overline{w}_k \geq M|w|^2, \quad z \in U, \quad w \in \mathbb{C}^n.
$$
\end{definition}

The problem of showing Property ($P$) holds on $b\Omega$ can always be reduced in two ways.  First, if a compact set $K$ can be written as a countable union of compact sets satisfying Property ($P$), then Property ($P$) holds on $K$.  Consequently, deciding if Property ($P$) holds on $b\Omega$ reduces to deciding if Property ($P$) holds on the compact subsets where obstructions to Property ($P$) might occur.  Secondly, points of D'Angelo finite type are always benign for Property ($P$).  A point $p \in b\Omega$ is said to be of (D'Angelo) finite type if the order of contact with $b\Omega$ of complex analytic varieties passing through $p$ is bounded above.  Otherwise $p$ is said to be of infinite type, \cite{DA82}.  By Catlin's work \cite{Ca84, Ca84-2}, one can find a suitable covering of the finite type points by submanifolds that satisfy the hypotheses of the following useful proposition.  We use the formulation in Straube's monograph \cite[Proposition 4.15]{St10}.  See also \cite[Proposition 1.12]{Si87}, where it was originally stated explicitly.

\begin{pro}\label{holomorphic dimension zero proposition} Let $\Omega \subset \mathbb{C}^n$ be a smooth, bounded pseudoconvex domain and $S$ a smooth submanifold of $b\Omega$.  If 
\begin{equation}\label{holomorphic dimension zero condition}
\dim(\mathbb{C}T_pS \cap \mathcal{N}_p) = 0, \quad p \in S,
\end{equation}
then any compact subset of $S$ satisfies Property ($P$).

\end{pro}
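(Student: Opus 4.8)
The plan is to construct, for each $M>0$, an explicit function on a neighborhood of the given compact set $K\subset S$ whose complex Hessian dominates $M|w|^2$, and then to renormalize it to take values in $[0,1]$. Fix a smooth defining function $r$ for $\Omega$ (negative inside, with $dr\neq 0$ on $b\Omega$) and set $\delta(z)=\operatorname{dist}(z,S)^2$, which is smooth on a tubular neighborhood of $S$ because $S$ is a smooth submanifold. For parameters $t,C>0$ to be chosen, let $\psi=e^{tr}+C\delta$. The two summands are designed to control complementary directions. Writing the complex Hessian form of $e^{tr}$ at $z$ applied to $w$ as
\[
t e^{tr}\Big(\sum_{j,k} r_{j\bar k}\,w_j\bar w_k + t\,\big|\textstyle\sum_j r_j w_j\big|^2\Big),
\]
the gradient-squared term governs the complex normal directions, while on $b\Omega$ the remaining term is $t$ times the Levi form, which is nonnegative by pseudoconvexity. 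Complementarily, the complex Hessian of $\delta$ at a point $p\in S$ is a nonnegative Hermitian form (twice the projection onto the normal bundle of $S$) that annihilates a $(1,0)$-vector $w$ exactly when $w\in\mathbb{C}T_pS$.

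The core of the argument is a uniform lower bound on $b\Omega$, obtained by a compactness/contradiction argument; this is precisely where the hypothesis enters. I would first produce $\epsilon,\epsilon',d_0>0$ such that for every $p\in K$ and every unit vector $w$, if both the normal component $|\sum_j r_j(p)w_j|\le\epsilon$ and the Levi form $\sum r_{j\bar k}(p)w_j\bar w_k\le\epsilon'$, then the complex Hessian of $\delta$ at $p$ applied to $w$ is at least $d_0$. If this failed there would be points $p_m\in K$ and unit vectors $w_m$ with all three quantities tending to $0$; passing to a convergent subsequence $p_m\to p\in K$, $w_m\to w$, the limit would be a complex tangential, Levi-null unit vector, hence $w\in\mathcal{N}_p$, lying in $\mathbb{C}T_pS$, contradicting $\dim(\mathbb{C}T_pS\cap\mathcal{N}_p)=0$. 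With $\epsilon,\epsilon',d_0$ in hand, choose $t$ so large that $t^2\epsilon^2\ge M$ and $t\epsilon'\ge M$, then $C$ so large that $Cd_0\ge M$. For any unit $w$ at $p\in K$ exactly one of the three cases (large normal part, large Levi form, or small in both) forces one of the three nonnegative contributions to exceed $M$, giving $\big(i\partial\bar\partial\psi\big)(p;w)\ge M|w|^2$.

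It then remains to pass from $b\Omega$ to a full two-sided neighborhood and to fix the range. Since all coefficients are continuous and $e^{tr}>0$, for the now-fixed $t,C$ the estimate persists on some neighborhood $U$ of $K$ in $\mathbb{C}^n$, up to a multiplicative loss that is compensated by starting the construction from a larger multiple of $M$; here one shrinks $U$ after fixing $t$ so that $e^{tr}\ge\tfrac12$, which dominates the sign-indefinite term $\sum r_{j\bar k}w_j\bar w_k$ in the normal case. On such a small $U$ the oscillation of $\psi$ is as small as we please, so $\lambda_M=(\psi-\min_U\psi)/\operatorname{osc}_U\psi$ is a $C^2$ function with $0\le\lambda_M\le 1$ whose complex Hessian is $\ge M|w|^2/\operatorname{osc}_U\psi\ge M|w|^2$, which is Property ($P$) for $K$. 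The main obstacle is the uniform estimate of the middle paragraph: the Levi form can degenerate and $\dim\mathcal{N}_p$ can jump along $S$, so the bound cannot be read off pointwise and must be extracted by compactness, which is the sole place the transversality hypothesis is used.
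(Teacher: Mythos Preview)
The paper does not prove this proposition; it is quoted from the literature (Sibony, Proposition~1.12, and Straube, Proposition~4.15). Your sketch reproduces the standard argument found there: the bounded plurisubharmonic function is built from $e^{tr}+C\,\operatorname{dist}(\cdot,S)^2$, with the $e^{tr}$ piece controlling the complex normal and Levi-positive directions and the squared distance controlling directions transverse to $S$; the uniform lower bound on $K$ is extracted by the compactness--contradiction step, which is indeed exactly where the hypothesis $\mathbb{C}T_pS\cap\mathcal{N}_p=\{0\}$ enters.

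One point deserves more care than you give it. The term $t\sum r_{j\bar k}w_j\bar w_k$ is sign-indefinite for $w$ with a nonzero normal component, and already on $K$ (not only when you pass to $U$) this must be absorbed. In case~(i) this is fine because the $t^2|\sum r_jw_j|^2\ge t^2\epsilon^2$ term dominates the $O(t)$ negative contribution once $t$ is large. In case~(iii), however, you only assumed $\sum r_{j\bar k}w_j\bar w_k\le\epsilon'$, not $\ge 0$, so the Hessian of $\psi$ is only $\ge -tB+Cd_0$ for some uniform bound $B$; hence $C$ must be chosen with $Cd_0\ge M+tB$, not merely $Cd_0\ge M$. With that adjustment (choose $t$ first, then $C$), the argument on $K$ is clean, and your continuity/renormalization step passes to a neighborhood exactly as you describe. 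So the sketch is correct in substance and matches the literature proof the paper cites.
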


Submanifolds that satisfy the hypotheses of Proposition \ref{holomorphic dimension zero proposition} are said to have {\bf holomorphic dimension zero}.  One of the consequences of Catlin's work is that the open set of points of finite type of $b\Omega$ can be covered by submanifolds of holomorphic dimension zero. It follows that $b\Omega$ then has a covering by compact sets all of which except possibly one, the compact set of infinite type points, satisfy Property ($P$).  In this way, we conclude that Property ($P$) holds on $b\Omega$ if and only if it holds on the set of infinite type points.  In the proof of Theorem \ref{main theorem}, instead of isolating the infinite type points, we will give a different covering of $b\Omega$ by using the construction of the Levi core and Proposition \ref{holomorphic dimension zero proposition}.

Let $\mathcal{D} = \{\mathcal{D}_p\}_{p \in b\Omega}$ be a complex distribution of a smooth manifold $M$ where $\mathcal{D}_p$ is a linear subspace over $\mathbb{C}$ of $\mathbb{C}T_pM$.  We define the support of $\mathcal{D}$ to be the set $S_{\mathcal{D}}$ of all $p \in M$ such that $\mathcal{D}_p$ is positive dimensional.  Although $S_{\mathcal{D}}$ may not have a smooth manifold structure, at each point $p \in M$, we can consider its (complexified) $C^{\infty}$-Zariski tangent space.

\begin{definition}\cite[Definition 2.5]{DaMo21}
Let $A \subset M$.  The $C^{\infty}$-Zariski tangent space at $p \in M$ is
$$
T_pA = \{X_p: X_p \in T_pM,\, X_pf = 0\hbox{ for all } f \in C^{\infty}(M)\hbox{ with } f|_A \equiv 0\}.
$$
The complexified tangent space $\mathbb{C}T_pA$ is the complexification of $T_pA$.
\end{definition}

The tangent space $TA = \sqcup_{p}T_pA$ (resp. $\mathbb{C}TA$) is always closed in $TM$ (resp. $\mathbb{C}TM$).  In the case where $A$ is an embedded submanifold, $T_pA$ agrees with the $C^{\infty}$-tangent space of $A$ at $p$, and for any set $A$ and any point $p \in A$, one can approximate $A$ by an embedded submanifold in the following sense.

\begin{lem}\cite[Proposition 2.6c]{DaMo21}\label{Proposition 2.6c of DaMo21}
Let $p$ be in a manifold $M$ containing a set $A$.  There is a real embedded submanifold $V \subset M$ locally containing $A$ such that $T_pA = T_pV$.
\end{lem}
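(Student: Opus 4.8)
The plan is to realize $V$ as the common zero set of finitely many smooth functions that vanish on $A$, chosen so that their differentials at $p$ cut out exactly the Zariski tangent space $T_pA$. Work in a coordinate chart identifying a neighborhood of $p$ with an open set $U \subseteq \mathbb{R}^m$, $m = \dim M$, with $p$ corresponding to the origin. Let $\mathcal{I}_A = \{f \in C^{\infty}(M) : f|_A \equiv 0\}$. Since $X_pf = d_pf(X_p)$, the definition of the $C^{\infty}$-Zariski tangent space says precisely that $T_pA = \bigcap_{f \in \mathcal{I}_A}\ker d_pf$, i.e. $T_pA$ is the annihilator in $T_pM$ of the set of covectors $\{d_pf : f \in \mathcal{I}_A\} \subseteq T_p^*M$.

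Because $T_p^*M$ is finite-dimensional, the linear span $W = \mathrm{span}\{d_pf : f \in \mathcal{I}_A\}$ has some finite dimension $m-k$, where $k = \dim T_pA$. First I would choose $f_1,\dots,f_{m-k} \in \mathcal{I}_A$ whose differentials $d_pf_1,\dots,d_pf_{m-k}$ form a basis of $W$; this is possible exactly because $W$ is finite-dimensional, which is what lets one replace the possibly infinite family $\mathcal{I}_A$ by finitely many defining functions. I would then set $V := \{x \in U' : f_1(x) = \cdots = f_{m-k}(x) = 0\}$ for a sufficiently small neighborhood $U'$ of $p$.

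The verification then splits into three routine checks. Since $d_pf_1,\dots,d_pf_{m-k}$ are linearly independent, the map $(f_1,\dots,f_{m-k})$ is a submersion near $p$, so after shrinking $U'$ the regular value theorem makes $V$ a smooth embedded submanifold of dimension $k$ with $T_pV = \bigcap_{i}\ker d_pf_i$. Because each $f_i$ lies in $\mathcal{I}_A$, we have $T_pA \subseteq \ker d_pf_i$ for every $i$, hence $T_pA \subseteq T_pV$; as both spaces have dimension $k$ this forces $T_pA = T_pV$. Finally, each $f_i$ vanishes on all of $A$, so $A \cap U' \subseteq V$, i.e. $V$ locally contains $A$. (Note this also covers the degenerate case $p \notin \overline{A}$: there a bump-function argument shows $W = T_p^*M$, so $k=0$ and $V = \{p\}$ trivially contains the locally empty $A$.)

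No step presents a genuine obstacle. The only points requiring care are the finite-dimensionality argument that allows the passage from $\mathcal{I}_A$ to finitely many functions whose differentials still span the full annihilator of $T_pA$, and the attendant shrinking of the neighborhood needed to invoke the submersion theorem so that $V$ is genuinely embedded rather than merely immersed.
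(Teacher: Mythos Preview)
Your argument is correct: extracting finitely many functions from $\mathcal{I}_A$ whose differentials at $p$ span the annihilator of $T_pA$, and cutting out $V$ as their common zero set via the submersion theorem, is exactly the standard proof of this fact. Note, however, that the paper does not supply its own proof of this lemma---it is quoted verbatim from \cite[Proposition~2.6c]{DaMo21}---so there is nothing in the present paper to compare against; your proof is presumably what one would find in the cited reference.
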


By complexifying both sides, it follows that $\mathbb{C}T_pA = \mathbb{C}T_pV$.  Given a distribution $\mathcal{D}$, Dall'Ara and Mongodi considered the following transfinite recursively defined sequence of derived distributions $\{\mathcal{D}^{\alpha}\}$ indexed over the ordinals defined by
\begin{equation}\label{derived distributions definition}
\mathcal{D}_p^{\alpha} = \begin{cases}
\mathcal{D}_p & \alpha = 0
\\
\mathcal{D}_p^{\alpha - 1} \cap \mathbb{C}T_pS_{\mathcal{D}^{\alpha - 1}} & \alpha \hbox{ is a successor ordinal}
\\
\bigcap_{\beta < \alpha}\mathcal{D}_p^{\beta} & \alpha \hbox{ is a limit ordinal.}
\end{cases}
\end{equation}

As shown in \cite{DaMo21}, when $\mathcal{D}$ is closed in the topology of $\mathbb{C}TM$, we get a number of desirable properties: $\mathcal{D}^{\alpha}$ and $S_{\mathcal{D}^{\alpha}}$ are closed in their respective topologies for all $\alpha$, the dimension of $\mathcal{D}_p$ is an upper semicontinuous function of $p \in M$, and the sequence $\{\mathcal{D}^{\alpha}\}$ stabilizes after countably many ordinals.  That is, there is a countable ordinal $\gamma$ such that $\mathcal{D}^{\gamma} = \mathcal{D}^{\eta}$ for all $\eta \geq \gamma$.  We call $\mathcal{D}^{\gamma}$ the core $\mathfrak{C}(\mathcal{D})$ of the distribution $\mathcal{D}$.  In this note, we will focus on the distribution $\mathcal{D} = \mathcal{N}$ the Levi null distribution; its core $\mathfrak{C}(\mathcal{N})$ will be referred to as the Levi core. The support of the core $S_{\mathfrak{C}(\mathcal{N})}$ consists entirely of infinite type points. We begin the original material of this note by reformulating \eqref{derived distributions definition}.

\begin{lem}\label{rewriting the definition of derived distributions}
Let $\Omega$ be a smooth, bounded pseudoconvex domain and $p \in b\Omega$.  For any countable ordinal $\alpha$,
\begin{equation}\label{More useful characterization of of Nalpha}
\mathcal{N}_p^{\alpha} = \begin{cases}
\mathcal{N}_p & \alpha = 0
\\
\mathbb{C}T_pS_{\mathcal{N}^{\alpha - 1}} \cap \mathcal{N}_p & \alpha \hbox{ successor ordinal} 
\\
\bigcap\limits_{\beta < \alpha} \mathbb{C}T_pS_{\mathcal{N}^{\beta}} \cap \mathcal{N}_p & \alpha \hbox{ limit ordinal}
\end{cases}
\end{equation}
\end{lem}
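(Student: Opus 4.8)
\emph{Proof proposal.} The plan is to prove, by transfinite induction on $\alpha$, the single unified identity
$$
\mathcal{N}_p^{\alpha} = \mathcal{N}_p \cap \bigcap_{\beta < \alpha} \mathbb{C}T_p S_{\mathcal{N}^{\beta}},
$$
from which both the successor and the limit cases of \eqref{More useful characterization of of Nalpha} will follow. The whole content of the lemma is that the recursion \eqref{derived distributions definition}, which at each successor step intersects the \emph{previous} derived distribution $\mathcal{N}_p^{\alpha-1}$ with a tangent space, can be ``unwound'' so that one instead intersects the \emph{original} $\mathcal{N}_p$ with all the accumulated tangent spaces $\mathbb{C}T_p S_{\mathcal{N}^{\beta}}$. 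The base case $\alpha = 0$ is immediate, since the empty intersection is all of $\mathbb{C}T_pM$ and the identity reads $\mathcal{N}_p^0 = \mathcal{N}_p$.

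For the successor step I would write $\alpha = \delta + 1$. By \eqref{derived distributions definition}, $\mathcal{N}_p^{\alpha} = \mathcal{N}_p^{\delta} \cap \mathbb{C}T_p S_{\mathcal{N}^{\delta}}$, and the inductive hypothesis applied to $\delta$ substitutes $\mathcal{N}_p^{\delta} = \mathcal{N}_p \cap \bigcap_{\beta < \delta} \mathbb{C}T_p S_{\mathcal{N}^{\beta}}$; appending the $\beta = \delta$ term recovers the unified identity for $\alpha$. For a limit ordinal $\alpha$ one has $\mathcal{N}_p^{\alpha} = \bigcap_{\delta < \alpha} \mathcal{N}_p^{\delta}$, and substituting the inductive hypothesis for each $\delta < \alpha$ gives $\mathcal{N}_p \cap \bigcap_{\delta < \alpha}\bigcap_{\beta < \delta} \mathbb{C}T_p S_{\mathcal{N}^{\beta}}$. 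Because $\alpha$ is a limit ordinal, every $\beta < \alpha$ satisfies $\beta < \beta+1 < \alpha$, so the double intersection ranges over exactly the indices $\beta < \alpha$ and collapses to $\bigcap_{\beta < \alpha} \mathbb{C}T_p S_{\mathcal{N}^{\beta}}$; this is precisely the limit line of \eqref{More useful characterization of of Nalpha}.

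It remains to collapse the successor case to the single term $\mathbb{C}T_p S_{\mathcal{N}^{\alpha-1}} \cap \mathcal{N}_p$. Here I would invoke that the derived distributions are decreasing, so that their supports are nested, $S_{\mathcal{N}^{\alpha-1}} \subseteq S_{\mathcal{N}^{\beta}}$ for every $\beta \leq \alpha - 1$. The one genuinely structural point is the monotonicity of the $C^{\infty}$-Zariski tangent space under inclusion: if $A \subseteq B$, then every smooth $f$ vanishing on $B$ also vanishes on $A$, so the annihilation conditions defining $T_pA$ form a superset of those defining $T_pB$, whence $T_p A \subseteq T_p B$ and, after complexification, $\mathbb{C}T_p A \subseteq \mathbb{C}T_p B$. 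Applying this to the nested supports shows that $\mathbb{C}T_p S_{\mathcal{N}^{\alpha-1}}$ is the smallest term of $\bigcap_{\beta < \alpha} \mathbb{C}T_p S_{\mathcal{N}^{\beta}}$, so the intersection equals it and $\mathcal{N}_p^{\alpha} = \mathbb{C}T_p S_{\mathcal{N}^{\alpha-1}} \cap \mathcal{N}_p$. No single step is a serious obstacle; the only care needed is the bookkeeping of ordinal indices in the limit case and the verification of the monotonicity of $\mathbb{C}T_p(\cdot)$, which is where I would concentrate the writing.
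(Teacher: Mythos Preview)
Your proof is correct and is essentially the same transfinite induction as the paper's, just organized slightly differently: you first establish the unified formula $\mathcal{N}_p^{\alpha} = \mathcal{N}_p \cap \bigcap_{\beta<\alpha}\mathbb{C}T_pS_{\mathcal{N}^{\beta}}$ and then collapse the successor case, whereas the paper argues the successor and limit cases of \eqref{More useful characterization of of Nalpha} directly. The one substantive ingredient---monotonicity of the $C^{\infty}$-Zariski tangent space under inclusion, used to drop all but the last tangent-space factor in the successor case---is the same in both arguments; you make it explicit, while the paper invokes it silently in the step $\mathbb{C}T_pS_{\mathcal{N}^{\alpha}}\cap \mathbb{C}T_pS_{\mathcal{N}^{\alpha-1}}\cap \mathcal{N}_p = \mathbb{C}T_pS_{\mathcal{N}^{\alpha}}\cap \mathcal{N}_p$.
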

\begin{proof}
We prove this claim using transfinite induction.  The base case follows by definition.  Assume that the claim holds for $\alpha$.  We will show it is true for its successor.  If $\alpha$ is a successor ordinal, then
\begin{eqnarray*}
\mathcal{N}_p^{\alpha + 1} &=& \mathbb{C}T_pS_{\mathcal{N}^{\alpha}} \cap \mathcal{N}_p^{\alpha}
\\
&=& \mathbb{C}T_pS_{\mathcal{N}^{\alpha}} \cap \mathbb{C}T_pS_{\mathcal{N}^{\alpha - 1}} \cap \mathcal{N}_p
\\
&=& \mathbb{C}T_pS_{\mathcal{N}^{\alpha}} \cap \mathcal{N}_p.
\end{eqnarray*}

If $\alpha$ is a limit ordinal, then by a similar argument, $\mathcal{N}_p^{\alpha + 1} = \mathbb{C}T_pS_{\mathcal{N}^{\alpha}} \cap \mathcal{N}_p$. Finally, suppose that $\alpha$ is a limit ordinal and the claim holds for all $\beta < \alpha$. Let $A = \{\beta: \beta < \alpha \hbox{ is a successor ordinal}\}$.  Since $\{\mathcal{N}^{\gamma}_p\}$ is a decreasing transfinite sequence,
\begin{eqnarray*}
\mathcal{N}_p^{\alpha} = \bigcap\limits_{\beta \in A} \mathcal{N}_p^{\beta}
= \bigcap\limits_{\beta \in A} \mathbb{C}T_pS_{\mathcal{N}^{\beta - 1}} \cap \mathcal{N}_p
= \bigcap\limits_{\beta < \alpha} \mathbb{C}T_pS_{\mathcal{N}^{\beta}} \cap \mathcal{N}_p.
\end{eqnarray*}

\end{proof}

Using this lemma, we can show that points outside of the support of the Levi core are locally contained in submanifolds of holomorphic dimension zero.  Below $\mathbb{B}(p, r)$ will denote the ball centered at $p$ of radius $r$.

\begin{lem} \label{Embedded Submanifold Holomorhpic Dimension Lemma}
 Let $\Omega$ and $p$ be as in the previous lemma.  Suppose $V$ is a real embedded submanifold in $b\Omega$ such that $$
 \mathbb{C}T_pV \cap \mathcal{N}_p^{\alpha} = \{0\} \hbox{ and } \mathbb{C}T_pV = \mathbb{C}T_pS_{\mathcal{N}^{\alpha}}
 $$ for some countable ordinal $\alpha$.  Then there exists an $r > 0$ such that $\mathbb{B}(p, r) \cap V$ has holomorphic dimension zero.
\end{lem}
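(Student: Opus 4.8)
The plan is to reduce the claim to two steps: first verify the holomorphic dimension zero condition \eqref{holomorphic dimension zero condition} at the single point $p$, and then propagate it to a small ball around $p$ by a closedness argument. The conclusion will then be immediate from the definition of holomorphic dimension zero.

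For the pointwise statement, the key observation I would make is that the two hypotheses together actually force the stronger equality $\mathbb{C}T_pV \cap \mathcal{N}_p = \{0\}$, with the \emph{full} Levi null distribution $\mathcal{N}_p$ rather than merely $\mathcal{N}_p^{\alpha}$. Indeed, applying Lemma \ref{rewriting the definition of derived distributions} at the successor ordinal $\alpha + 1$ gives $\mathcal{N}_p^{\alpha+1} = \mathbb{C}T_pS_{\mathcal{N}^{\alpha}} \cap \mathcal{N}_p$, and substituting the second hypothesis $\mathbb{C}T_pV = \mathbb{C}T_pS_{\mathcal{N}^{\alpha}}$ yields $\mathcal{N}_p^{\alpha+1} = \mathbb{C}T_pV \cap \mathcal{N}_p$. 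Since the derived sequence is decreasing we have $\mathcal{N}_p^{\alpha+1} \subseteq \mathcal{N}_p^{\alpha}$, while trivially $\mathcal{N}_p^{\alpha+1} \subseteq \mathbb{C}T_pV$; hence $\mathbb{C}T_pV \cap \mathcal{N}_p = \mathcal{N}_p^{\alpha+1} \subseteq \mathbb{C}T_pV \cap \mathcal{N}_p^{\alpha} = \{0\}$, where the last equality is the first hypothesis.

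To pass from $p$ to a neighborhood, I would argue by contradiction. If no admissible $r$ existed, there would be a sequence $q_n \to p$ in $V$ with $\mathbb{C}T_{q_n}V \cap \mathcal{N}_{q_n} \neq \{0\}$; choosing Euclidean-unit vectors $X_n$ in these intersections and passing to a subsequence, I may assume $X_n \to X$ with $|X| = 1$. Because $V$ is a smooth embedded submanifold, $\mathbb{C}TV$ is a closed subbundle, so $X \in \mathbb{C}T_pV$; because $\mathcal{N}$ is closed, $X \in \mathcal{N}_p$. Thus $X$ is a nonzero vector of $\mathbb{C}T_pV \cap \mathcal{N}_p$, contradicting the previous step. (Equivalently, $\mathbb{C}TV \cap \mathcal{N}$ is an intersection of closed distributions, hence itself a closed distribution, so its fiber dimension is upper semicontinuous and vanishes near $p$; normalizing the $X_n$ is exactly what keeps the limit nonzero.) This produces an $r > 0$ with $\dim(\mathbb{C}T_qV \cap \mathcal{N}_q) = 0$ for every $q \in \mathbb{B}(p,r) \cap V$.

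Finally, since $\mathbb{B}(p,r) \cap V$ is open in $V$, it is itself a smooth embedded submanifold with $\mathbb{C}T_q(\mathbb{B}(p,r) \cap V) = \mathbb{C}T_qV$, so the vanishing just obtained is precisely condition \eqref{holomorphic dimension zero condition} for $S = \mathbb{B}(p,r) \cap V$; that is, $\mathbb{B}(p,r) \cap V$ has holomorphic dimension zero. I expect the main obstacle to be this propagation step, since the hypotheses only constrain the fibers at $p$ itself; it is resolved entirely by the closedness of $\mathcal{N}$ (and of $\mathbb{C}TV$), which upgrades the pointwise vanishing at $p$ to vanishing on a full neighborhood.
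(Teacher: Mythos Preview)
Your proof is correct and follows essentially the same two-step structure as the paper (first reduce to $\mathbb{C}T_pV \cap \mathcal{N}_p = \{0\}$ at $p$, then propagate by closedness/upper semicontinuity). The only noteworthy difference is that in the first step you apply Lemma~\ref{rewriting the definition of derived distributions} at the successor ordinal $\alpha+1$, which handles all $\alpha$ uniformly, whereas the paper splits into the cases $\alpha=0$, $\alpha$ a successor, and $\alpha$ a limit ordinal.
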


\begin{proof}
If $\alpha = 0$, then the first hypothesis reduces to $\mathbb{C}T_pV \cap \mathcal{N}_p = \{0\}.$ If $\alpha$ is a successor ordinal, then by Lemma \ref{rewriting the definition of derived distributions},
$$
\{0\} = \mathbb{C}T_pV \cap \mathcal{N}_p^{\alpha} = \mathbb{C}T_pS_{\mathcal{N}^{\alpha}} \cap \mathbb{C}T_pS_{\mathcal{N}^{\alpha-1}} \cap \mathcal{N}_p = \mathbb{C}T_pV \cap \mathcal{N}_p.
$$
If $\alpha$ is a limit ordinal, then by similarly using Lemma \ref{rewriting the definition of derived distributions}, $\{0\} = \mathbb{C}T_pV \cap \mathcal{N}_p.$
Since $W_p = \mathbb{C}T_pV \cap \mathcal{N}_p$ defines a closed distribution when $p$ varies over $b\Omega$, the dimension of $W_p$ will be an upper semicontinuous function of $p \in b\Omega$.  Therefore, for sufficiently small $r > 0$,\, $\mathbb{B}(p, r) \cap V$ has holomorphic dimension zero.
\end{proof}

\begin{lem}\label{intersection support equals support at the limit ordinal}
Let $\Omega$ be as above.  If $\alpha$ is a limit ordinal, then 
$$
S_{\mathcal{N}^{\alpha}} = \bigcap_{\gamma < \alpha}S_{\mathcal{N}^{\gamma}}.
$$
\end{lem}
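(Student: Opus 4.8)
The plan is to prove the two inclusions separately, the first being immediate and the second carrying the content of the lemma. For the forward inclusion $S_{\mathcal{N}^{\alpha}} \subseteq \bigcap_{\gamma < \alpha} S_{\mathcal{N}^{\gamma}}$, I would invoke the limit-ordinal case of the defining equation \eqref{derived distributions definition}, namely $\mathcal{N}_p^{\alpha} = \bigcap_{\beta < \alpha}\mathcal{N}_p^{\beta}$, which gives $\mathcal{N}_p^{\alpha} \subseteq \mathcal{N}_p^{\gamma}$ for every $\gamma < \alpha$. Hence if $\mathcal{N}_p^{\alpha}$ is positive dimensional, then so is each $\mathcal{N}_p^{\gamma}$, which is precisely the assertion that $p \in S_{\mathcal{N}^{\gamma}}$ for all $\gamma < \alpha$.

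For the reverse inclusion, I would fix $p \in \bigcap_{\gamma < \alpha} S_{\mathcal{N}^{\gamma}}$, so that $\mathcal{N}_p^{\gamma} \neq \{0\}$ for every $\gamma < \alpha$. The key observation is that $\{\mathcal{N}_p^{\beta}\}_{\beta < \alpha}$ is a nested decreasing family of linear subspaces of the finite-dimensional space $\mathbb{C}T_p(b\Omega)$. Therefore the function $\beta \mapsto \dim \mathcal{N}_p^{\beta}$ is non-increasing, integer valued, and bounded below by $0$, so it is eventually constant: there is an ordinal $\beta_0 < \alpha$ with $\dim \mathcal{N}_p^{\beta} = \dim \mathcal{N}_p^{\beta_0}$ for all $\beta$ satisfying $\beta_0 \leq \beta < \alpha$. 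Since the subspaces are nested, equality of finite dimension forces $\mathcal{N}_p^{\beta} = \mathcal{N}_p^{\beta_0}$ for all such $\beta$.

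With this stabilization established, the intersection collapses onto a single member: $\mathcal{N}_p^{\alpha} = \bigcap_{\beta < \alpha}\mathcal{N}_p^{\beta} = \mathcal{N}_p^{\beta_0}$, because every term with $\beta < \beta_0$ contains $\mathcal{N}_p^{\beta_0}$ while every term with $\beta \geq \beta_0$ equals it. As $\beta_0 < \alpha$, the hypothesis $p \in S_{\mathcal{N}^{\beta_0}}$ yields $\mathcal{N}_p^{\beta_0} \neq \{0\}$, whence $\mathcal{N}_p^{\alpha} \neq \{0\}$ and $p \in S_{\mathcal{N}^{\alpha}}$. This completes the reverse inclusion and hence the proof.

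The main obstacle is the reverse inclusion, and specifically the crucial role of the limit-ordinal hypothesis within it: finite-dimensionality guarantees a stabilization ordinal $\beta_0$, but one must be able to take $\beta_0$ strictly below $\alpha$ so that the membership $p \in S_{\mathcal{N}^{\beta_0}}$ is actually furnished by the hypothesis. For a successor $\alpha = \delta + 1$ the analogue of the lemma fails, since the passage $\mathcal{N}_p^{\delta} \to \mathcal{N}_p^{\delta+1}$ intersects with the Zariski tangent space $\mathbb{C}T_pS_{\mathcal{N}^{\delta}}$ and can genuinely lower the dimension; it is precisely the absence of such a cut at a limit ordinal, combined with finiteness of the dimension, that makes the argument succeed.
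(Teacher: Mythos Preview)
Your proof is correct. For the reverse inclusion the paper argues a little differently: rather than tracking dimensions, it intersects each $\mathcal{N}_z^{\gamma}$ with the unit sphere $\{X \in \mathbb{C}T_z b\Omega : \|X\| = 1\}$ to obtain a decreasing transfinite family $\{\mathcal{UN}_z^{\gamma}\}_{\gamma < \alpha}$ of nonempty compact sets, and then invokes the finite intersection property to produce a unit vector in $\bigcap_{\gamma < \alpha}\mathcal{UN}_z^{\gamma} \subset \mathcal{N}_z^{\alpha}$. Both arguments ultimately rest on the finite-dimensionality of $\mathbb{C}T_p b\Omega$, but yours exploits it algebraically (a non-increasing $\mathbb{N}$-valued function on ordinals must stabilize) while the paper's exploits it topologically (the unit sphere is compact). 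Your route yields the slightly sharper by-product $\mathcal{N}_p^{\alpha} = \mathcal{N}_p^{\beta_0}$ for some $\beta_0 < \alpha$, whereas the compactness argument only directly gives $\mathcal{N}_p^{\alpha} \neq \{0\}$; for the lemma as stated either suffices.
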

\begin{proof}
Since $\mathcal{N}^{\alpha} \subset \mathcal{N}^{\gamma}$ for all $\gamma < \alpha$, $S_{\mathcal{N}^{\alpha}} \subset \cap_{\gamma < \alpha}S_{\mathcal{N}^{\gamma}}$.  For the reverse containment, without loss of generality, let $z \in \cap_{\gamma < \alpha} S_{\mathcal{N}^{\gamma}}$ and consider
$$
\mathcal{UN}_z^{\gamma} = \mathcal{N}^{\gamma}_z \cap \{X \in \mathbb{C}T_zb\Omega: \|X\| = 1\}, \quad \gamma < \alpha.
$$
Then $\{\mathcal{UN}_z^{\gamma}\}_{\gamma < \alpha}$ is a decreasing transfinite sequence of non-empty compact sets in $\mathbb{C}T_zb\Omega$.  Thus, there exists 
$$
X_{\infty} \in \bigcap\limits_{\gamma < \alpha} \mathcal{UN}_z^{\gamma} \subset \mathcal{N}^{\alpha}_z.  
$$
Thus, $z \in S_{\mathcal{N}^{\alpha}}$.  The proof is complete.
\end{proof}

\begin{lem}\label{second transfinite induction lemma}
Let $\Omega$ be as above.  For any ordinal $\alpha \geq 1$,
\begin{equation}\label{Partition of weakly pseduconvex points}
S_{\mathcal{N}} = \bigcup_{0 \leq \gamma < \alpha} \left(S_{\mathcal{N}^{\gamma}}\setminus S_{\mathcal{N}^{\gamma + 1}}\right) \cup S_{\mathcal{N}^{\alpha}}.
\end{equation}
\end{lem}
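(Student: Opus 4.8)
The plan is to argue by transfinite induction on $\alpha$, after recording the single structural fact that drives everything: since $\{\mathcal{N}^{\gamma}\}$ is a decreasing transfinite sequence of distributions, passing to supports gives a decreasing nested family, $S_{\mathcal{N}^{\eta}} \subseteq S_{\mathcal{N}^{\gamma}}$ whenever $\gamma \leq \eta$. In particular every set appearing on the right-hand side of \eqref{Partition of weakly pseduconvex points} is a subset of $S_{\mathcal{N}^0} = S_{\mathcal{N}}$, so the inclusion $\supseteq$ holds with no further work, and it remains only to prove $\subseteq$.

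For the base case $\alpha = 1$, the right-hand side is $\left(S_{\mathcal{N}^0}\setminus S_{\mathcal{N}^1}\right) \cup S_{\mathcal{N}^1}$, which equals $S_{\mathcal{N}^0} = S_{\mathcal{N}}$ because $S_{\mathcal{N}^1}\subseteq S_{\mathcal{N}^0}$. For the successor step, I would assume the identity for $\alpha$ and split off the last difference term in the union for $\alpha + 1$: the nesting $S_{\mathcal{N}^{\alpha+1}}\subseteq S_{\mathcal{N}^{\alpha}}$ gives $\left(S_{\mathcal{N}^{\alpha}}\setminus S_{\mathcal{N}^{\alpha+1}}\right) \cup S_{\mathcal{N}^{\alpha+1}} = S_{\mathcal{N}^{\alpha}}$, which collapses the right-hand side for $\alpha+1$ back to the right-hand side for $\alpha$, equal to $S_{\mathcal{N}}$ by the inductive hypothesis. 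Both of these are purely routine set manipulations.

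The main obstacle is the limit case, and this is precisely where Lemma \ref{intersection support equals support at the limit ordinal} is indispensable. Let $\alpha$ be a limit ordinal, assume the identity for all $1 \leq \beta < \alpha$, and fix $z \in S_{\mathcal{N}}$. If for some $\beta < \alpha$ the inductive hypothesis places $z$ in a difference set $S_{\mathcal{N}^{\gamma}}\setminus S_{\mathcal{N}^{\gamma+1}}$ with $\gamma < \beta$, then $\gamma < \alpha$ and $z$ already lies in the right-hand side for $\alpha$. Otherwise the inductive hypothesis forces $z \in S_{\mathcal{N}^{\beta}}$ for every $\beta < \alpha$, so $z \in \bigcap_{\beta < \alpha}S_{\mathcal{N}^{\beta}}$; by Lemma \ref{intersection support equals support at the limit ordinal} this intersection is exactly $S_{\mathcal{N}^{\alpha}}$, and hence $z$ again lies in the right-hand side. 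The delicate point is entirely contained in this dichotomy: a point of $S_{\mathcal{N}}$ that avoids all the ``jumps'' $S_{\mathcal{N}^{\gamma}}\setminus S_{\mathcal{N}^{\gamma+1}}$ below $\alpha$ must survive into $S_{\mathcal{N}^{\alpha}}$, and at a limit ordinal this survival is guaranteed only by the equality of the support with the full intersection.

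As an alternative to the induction, the same conclusion follows in one stroke by a minimal-ordinal argument: given $z \in S_{\mathcal{N}}\setminus S_{\mathcal{N}^{\alpha}}$, the well-ordering of the ordinals yields a least $\delta$ with $z \notin S_{\mathcal{N}^{\delta}}$, and $0 < \delta \leq \alpha$; Lemma \ref{intersection support equals support at the limit ordinal} rules out $\delta$ being a limit ordinal (else $z$ would lie in every earlier support and hence in their intersection $S_{\mathcal{N}^{\delta}}$), so $\delta = \gamma + 1$ and $z \in S_{\mathcal{N}^{\gamma}}\setminus S_{\mathcal{N}^{\gamma+1}}$ with $\gamma < \alpha$. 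I would present the transfinite induction as the primary argument to match the surrounding development, but either route isolates Lemma \ref{intersection support equals support at the limit ordinal} as the only nontrivial ingredient.
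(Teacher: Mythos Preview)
Your proof is correct and follows essentially the same transfinite induction as the paper, which likewise omits the routine base and successor steps and handles the limit case via the inductive hypothesis together with Lemma \ref{intersection support equals support at the limit ordinal}. The alternative minimal-ordinal argument you sketch is a nice addition not in the paper, but the primary route matches.
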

\begin{proof}
We argue by transfinite induction on $\alpha$.  We omit the base case and successor ordinal cases and only show the limit ordinal case.  Let $\alpha$ be a limit ordinal and suppose
\begin{equation}\label{Limit ordinals inductive hypothesis for Lemma 11}
S_{\mathcal{N}} = \bigcup_{0 \leq \gamma < \beta} \left(S_{\mathcal{N}^{\gamma}}\setminus S_{\mathcal{N}^{\gamma + 1}}\right) \cup S_{\mathcal{N}^{\beta}}, \quad \beta < \alpha.
\end{equation}
It is clear the right hand side of \eqref{Partition of weakly pseduconvex points} is contained in the left hand side.  We show the other containment.  Suppose $z \in S_{\mathcal{N}}$ but $z \not\in \cup_{0 \leq \gamma < \alpha} (S_{\mathcal{N}^{\gamma}} \setminus S_{\mathcal{N}^{\gamma + 1}})$.  Then $z \not\in \cup_{0 \leq \gamma < \beta} (S_{\mathcal{N}^{\gamma}} \setminus S_{\mathcal{N}^{\gamma + 1}})$ for all $\beta < \alpha$.  By \eqref{Limit ordinals inductive hypothesis for Lemma 11}, $z \in S_{\mathcal{N}^{\beta}}$ for all $\beta < \alpha$.  By Lemma \ref{intersection support equals support at the limit ordinal}, $z \in S_{\mathcal{N}^{\alpha}}$.  The proof is complete.

\end{proof}

\section{The main results and example}

\begin{proof}[Proof of Theorem \ref{main theorem}]
Define $K_{-1}$ to be the strongly pseudoconvex boundary points.  Let $A$ be the set of all $\alpha \geq 0$ such that $\mathcal{N}^{\alpha} \neq \mathcal{N}^{\alpha + 1}$. For $\alpha \in A$, define
$
K_\alpha = S_{\mathcal{N}^{\alpha}} \setminus S_{\mathcal{N}^{\alpha + 1}}.
$
By Lemma \ref{second transfinite induction lemma}, the boundary of $\Omega$ has a partition
\begin{equation}\label{partition}
b\Omega = S_{\mathfrak{C}(\mathcal{N})} \cup K_{-1} \cup \bigcup_{\alpha \in A} K_{\alpha}.
\end{equation}

We claim that each $K_{\alpha},\, \alpha \geq -1$, can be covered by countably many compact sets satisfying Property ($P$). First, consider the $\alpha \geq 0$ cases. By Lemma \ref{Proposition 2.6c of DaMo21}, in a small neighborhood of any $p \in K_{\alpha}$ there is a real embedded submanifold $V$ of $b\Omega$ such that for $r_1$ sufficiently small
$$
V \supset \left(S_{\mathcal{N}^{\alpha}} \setminus S_{\mathcal{N}^{\alpha + 1}}\right) \cap \mathbb{B}(p, r_1) = S_{\mathcal{N}^{\alpha}} \cap \mathbb{B}(p, r_1)
$$
and
$$
\mathbb{C}T_p V = \mathbb{C}T_p (S_{\mathcal{N}^{\alpha}} \setminus S_{\mathcal{N}^{\alpha + 1}}) = \mathbb{C}T_p S_{\mathcal{N}^{\alpha}}.
$$
Since $p \in K_{\alpha}$,
$$
\mathbb{C} T_pV \cap \mathcal{N}_p^{\alpha} = \mathbb{C}T_pS_{\mathcal{N}^{\alpha}} \cap \mathcal{N}_p^{\alpha} = \mathcal{N}_p^{\alpha + 1} = \{0\}.
$$
By Lemma \ref{Embedded Submanifold Holomorhpic Dimension Lemma}, there exists $r_2$ with $r_1 > r_2 > 0$ such that $\mathbb{B}(p, r_2) \cap V$ has holomorphic dimension zero.  By Proposition \ref{holomorphic dimension zero proposition}, any compact subset of $\mathbb{B}(p, r_2) \cap V$ satisfies Property ($P$).  

Let $V_p \subset \mathbb{B}(p, r_2) \cap V$ be a homeomorphic image of a closed ball in $\mathbb{R}^{\dim(V)}$ with $p \in Int(V_p)$.  Since $\mathbb{B}(p, r_2) \cap V$ locally contains $K_{\alpha}$ near $p$, for sufficiently small $r_p > 0$, 
$$
\mathbb{B}(p, r_{p}) \cap K_{\alpha} \subset Int(V_p).
$$
Since every open covering of $K_{\alpha}$ will have a countable subcovering, there is a countable index set $I$ and points $\{p_j\} \subset K_{\alpha}$  such that
$$
K_{\alpha} = \bigcup\limits_{j \in I} \mathbb{B}(p_j, r_{p_j}) \cap K_{\alpha} \subset \bigcup_{j \in I}V_{p_j}.
$$
Since each $V_{p_j}$ satisfies Property ($P$), the claim is proved for $\alpha \geq 0$.  For the $\alpha = -1$ case, $K_{-1}$ is open in $b\Omega$ and every embedded submanifold of it has holomorphic dimension zero. Thus, $K_{-1}$ can be covered by countably many closed balls satisfying Property ($P$).  The claim is proved.  By \eqref{partition}, $b\Omega$ satisfies Property ($P$) if and only if $S_{\mathfrak{C}(\mathcal{N})}$ satisfies Property ($P$).

\end{proof}

\begin{proof}[Proof of Corollary \ref{main corollary}]
In \cite[Remarque p.310]{Si87}, using the language of $B$-regularity and the Jensen boundary, Sibony showed that if $b\Omega = S_1 \cup S_2$, $S_2$ is compact with Hausdorff two-dimensional measure zero, and Property ($P$) holds on all compact subsets of $S_1$, then Property ($P$) holds on $b\Omega$.  In \eqref{partition}, since Property ($P$) holds on compact subsets of $K_{\alpha}$ for $\alpha \geq -1$, if the Hausdorff two-dimensional measure of $S_{\mathfrak{C}(\mathcal{N})}$ equals zero, then Property ($P$) holds on $b\Omega$.
\end{proof}

\begin{proof}[Proof of Corollary \ref{corollary two}]
By the previous corollary, Property ($P$) holds on $b\Omega$. Therefore the $\overline{\partial}$-Neumann operator $N_1$ is compact.
\end{proof}

 In \cite[Example 1]{Bo88}, Boas gave an example of a Hartogs domain in $\mathbb{C}^2$ satisfying Property ($P$) in which the projection of the infinite type boundary points onto the complex plane forms a square Cantor set.  We modify the construction to give an example of a domain satisfying Property ($P$) in which the Hausdorff dimension of $S_{\mathfrak{C}(\mathcal{N})}$ is three and has positive three-dimensional Hausdorff measure.  In particular, what is new here is the calculation of the Levi core.
 
Let $\Omega$ be a complete Hartogs domain in $\mathbb{C}^2$ over the unit disk $\mathbb{D} \subset \mathbb{C}$
\begin{equation}\label{Hartogs domain prototype}
\Omega = \left\{(z, w):\, z\in \mathbb{D},\, |w|^2 < e^{-\phi(z)}\right\}.
\end{equation}
Given a compact subset $K \subset \{z:\, |z| < {1 \over 2}\}$, one can define $\phi$ so that $\Omega$ is smooth, bounded, pseudoconvex and 
\begin{equation}\label{formulas 1 for null sets in Hartogs domains}
S_{\mathcal{N}} = \left\{\left(z, e^{-{\phi(z) \over 2} + i \theta}\right):\, z \in K,\, \theta \in [0, 2\pi]\right\}, 
\end{equation}
and
\begin{equation}
\mathcal{N}_{(z, w)} = sp_{\mathbb{C}}\left\{\partial_z + e^{-{\phi(z) \over 2} + i\theta}\phi_z(z)\partial_w\right\}, \quad (z, w) \in S_{\mathcal{N}}.
\end{equation}
For such domains, Property ($P$) holds on $b\Omega$ if and only if $K$ has empty fine interior. See \cite[Section 4.7]{St10} for more details on the characterization of Property ($P$) for the complete Hartogs domains.  

\begin{example}
Consider a (fat) Cantor set of positive one-dimensional measure on the real line.  Let $K$ be the product of this set with itself, but rescaled so that $K \subset \mathbb{D}(0, 1/2)$.  Since $K$ has empty fine interior, the domain $\Omega$ given by \eqref{Hartogs domain prototype} satisfies Property ($P$).  Due to the rotational symmetry of $\Omega$, by \eqref{formulas 1 for null sets in Hartogs domains}, $S_{\mathcal{N}}$ has Hausdorff dimension three and positive three-dimensional Hausdorff measure. It remains to show that $\mathcal{N} = \mathfrak{C}(\mathcal{N})$, which implies that $S_{\mathcal{N}} = S_{\mathfrak{C}(\mathcal{N})}$.  We compute $\mathbb{C}TS_{\mathcal{N}}$.

Fix
$$
p = (z_0, e^{-{\phi(z_0) \over 2} + i\theta}) \hbox{ with } z_0 = x_0 + iy_0 \in K,
$$ 
and let $f$ be a smooth function on $b\Omega$ that vanishes on $S_{\mathcal{N}}$.  Define $h_i:\mathbb{R} \to b\Omega$, $i = 1, 2, 3$, by
 $$
 h_1(t) = (z_0, e^{-{\phi(z_0) \over 2} + it}), \quad h_2(t) = (x_0 + it, e^{-{\phi(x_0 + it) \over 2} + i\theta}), \quad h_3(t) = (t + iy_0, e^{-{\phi(t + iy_0) \over 2} + i\theta}).
 $$
Since $f\circ h_1 \equiv 0$, taking the derivative with respect to $t$ and evaluating at $t = \theta$ yields
$$
i{\partial f \over \partial w}(p)e^{-{\phi(z_0) \over 2} + i\theta} - i{\partial f \over \partial \overline{w}}(p)e^{-{\phi(z_0) \over 2} - i\theta}= 0
$$
Thus,
\begin{equation}\label{first tangent vector}
X_p = ie^{-{\phi(z_0) \over 2} + i\theta}\partial_w|_{p} - ie^{-{\phi(z_0) \over 2} - i\theta}\partial_{\overline{w}}|_{p}
\end{equation}
is in $\mathbb{C}T_pS_{\mathcal{N}}$.  Since $K$ is the product of perfect sets, there are sequences $(x_n), (y_n)$ approaching $x_0$ and $y_0$ respectively with $(x_n, y_0), (x_0, y_n) \in K$.  Then
$$
h_2(y_n), h_3(x_n) \in S_{\mathcal{N}}, \quad  n \in \mathbb{N}.
$$
It follows that 
\begin{eqnarray*}
0 = {d \over dt}(f \circ h_2)(y_0) = if_z(p) - if_{\overline{z}}(p) &+& f_{w}(p)e^{-{\phi(z_0) \over 2} + i\theta}\left(-{i \over 2}\phi_z(z_0) + {i \over 2}\phi_{\overline{z}}(z_0) \right) 
\\
&+& f_{\overline{w}}(p)e^{-{\phi(z_0) \over 2} - i\theta}\left(-{i \over 2}\phi_{z}(z_0) + {i \over 2}\phi_{\overline{z}}(z_0) \right)
\end{eqnarray*}
and
\begin{eqnarray*}
0 = {d \over dt}(f \circ h_3)(x_0) = f_z(p) + f_{\overline{z}}(p) &+& f_{w}(p)e^{-{\phi(z_0) \over 2} + i\theta}\left(-{1 \over 2}\phi_z(z_0) - {1 \over 2}\phi_{\overline{z}}(z_0) \right) 
\\
&+& f_{\overline{w}}(p)e^{-{\phi(z_0) \over 2} - i\theta}\left(-{1 \over 2}\phi_{z}(z_0) - {1 \over 2}\phi_{\overline{z}}(z_0) \right).
\end{eqnarray*}
Let $z = x + iy$.  The vectors
$$
Y_p = {\partial_{y}|_p} + e^{-{\phi(z_0) \over 2} + i\theta}\left(-{i \over 2}\phi_z(z_0) + {i \over 2}\phi_{\overline{z}}(z_0) \right)\partial_{w}|_{p} + e^{-{\phi(z_0) \over 2} - i\theta}\left(-{i \over 2}\phi_{z}(z_0) + {i \over 2}\phi_{\overline{z}}(z_0) \right)\partial_{\overline{w}}|_{p},
$$
$$
Z_p = \partial_{x}|_p + e^{-{\phi(z_0) \over 2} + i\theta}\left(-{1 \over 2}\phi_z(z_0) - {1 \over 2}\phi_{\overline{z}}(z_0) \right)\partial_{w}|_{p} + e^{-{\phi(z_0) \over 2} - i\theta}\left(-{1 \over 2}\phi_{z}(z_0) - {1 \over 2}\phi_{\overline{z}}(z_0) \right)\partial_{\overline{w}}|_{p}
$$
also belong to $\mathbb{C}T_pS_{\mathcal{N}}$.  The three vectors $X_p, Y_p, Z_p$ are linearly independent. By dimensional reasons, $\mathbb{C}T_pS_{\mathcal{N}} = \mathbb{C}T_pb\Omega$.  Thus, $\mathcal{N}_p^{1} = \mathcal{N}_p$.  If $p \not\in S_{\mathcal{N}}$, then trivially $\mathcal{N}_p^{1} = \mathcal{N}_p$.  It follows that $\mathcal{N} = \mathfrak{C}(\mathcal{N})$.
\end{example}

 \subsection*{Funding}

{\fontsize{11.5}{10}\selectfont

The research of the author is supported by an AMS-Simons travel grant.
}

  \subsection*{Acknowledgements}
{\fontsize{11.5}{10}\selectfont
The author would like to thank Emil Straube for giving valuable advice and feedback during the preparation of this note.  He would also like to thank Harold Boas, Tanuj Gupta and Shreedhar Bhat for useful conversations.  }

\bibliographystyle{alphaspecial}

\begin{thebibliography}{HD}


{\fontsize{11}{11.0}\selectfont


\bibitem{Bo88} Boas, Harold, Small sets of infinite type are benign for the $\overline{\partial}$-Neumann Problem, \emph{Proc. Amer. Math. Soc.} {\bf 103} (1988), 569-578.

\bibitem{Ca84-2}Catlin, David, Boundary Invariants of Pseudoconvex Domains, \emph{Ann. Math.
(2)}, {\bf 120} (1984), 529-586.

\bibitem{Ca84} Catlin, David, Global regularity of the $\overline{\partial}$-Neumann problem, In \emph{Complex Analysis of Several Variables} (ed. by Y.-T.~Siu). Proc. Sympos. Pure Math. 41, Amer. Math. Soc., Providence 1984, 39-49.

\bibitem{ChFu05} Christ, Michael and Fu, Siqi, Compactness in the $\overline{\partial}$-Neumann problem, magnetic Schr\"odinger operators, and the Aharonov-Bohm effect, \emph{Adv. Math.} {\bf 197} Nr. 1 (2005), 1-40.

\bibitem{DaMo21} Dall'Ara, Gian Maria and Mongodi, Samuele, The Core of the Levi Distribution, preprint 2021, arXiv:2109.04763.

\bibitem{DA82} D'Angelo, John, Real hypersurfaces, orders of contact, and applications, \emph{Ann. Math. (2)} {\bf 115} (1982), 615-637.

\bibitem{FuSt01} Fu, Siqi and Straube, Emil J., Compactness in the $\overline{\partial}$-Neumann problem. In \emph{Complex Analysis and Geometry} (ed. by J. McNeal). Ohio State Univ. Math. Res. Inst. Publ. 9, de Gruyter, Berlin 2001, 141-160.

\bibitem{FuSt02} Fu, Siqi and Straube, Emil J., Semi-classical analysis of Schr\"odinger operators and compactness in the $\overline{\partial}$-Neumann problem, \emph{J. Math. Anal. Appl.} {\bf 271} (2002), 267-282.

\bibitem{Mc02} McNeal, Jeffery D., A sufficient condition for compactness of the $\overline{\partial}$-Neumann operator, \emph{J. Funct. Anal.} {\bf 195} (2002), Nr. 1, 190-205.

\bibitem{MuSt07} Munasinghe, Samangi and Straube, Emil J., Complex tangential flows and compactness of the $\overline{\partial}$-Neumann operator, \emph{Pacific J. Math.} {\bf 232} (2007), 343-354.


\bibitem{Si87} Sibony, Nessim, Une classe de domaines pseudoconvexes, \emph{Duke Math. J.} {\bf 55} (1987), Nr. 2, 299-319.

\bibitem{St10} Straube, Emil J., \emph{Lectures on the $\mathcal{L}^{2}$-Sobolev theory of the $\overline{\partial}$-Neumann problem}, ESI Lectures in Mathematics and Physics, European Mathematical Society (EMS), Z\"{u}rich 2010.

\bibitem{Zh21} Zhang, Yue, Sufficient conditions for compactness of the $\overline{\partial}$-Neumann operator on high level forms, \emph{Pacific J. Math.} {\bf 313} (2021), 213-249. 

}


\end{thebibliography}

\fontsize{11}{9}\selectfont

\vspace{0.5cm}

\noindent jtreuer@tamu.edu;

 \vspace{0.2 cm}

\noindent Department of Mathematics, Texas A\&M University, College Station, TX 77843-3368, USA

\end{document}